\setlist[enumerate]{leftmargin=1.5em}
\setlist[itemize]{leftmargin=1.5em}
\definecolor{green}{rgb}{0,0.8,0} 
\newtheorem{theorem}{Theorem}[section]
\newtheorem{lemma}[theorem]{Lemma}
\theoremstyle{definition}
\theoremstyle{remark}
\newtheorem{remark}[theorem]{Remark}
\numberwithin{equation}{section}
\newcommand{\nrm}[1]{\Vert#1\Vert}
\newcommand{\brk}[1]{\langle#1\rangle}
\newcommand{\nnrm}[1]{{\vert\kern-0.25ex\vert\kern-0.25ex\vert #1 
    \vert\kern-0.25ex\vert\kern-0.25ex\vert}}
\renewcommand{\Re}{\mathrm{Re}}
\newcommand{\lap}{\Delta}
\newcommand{\rd}{\partial}
\newcommand{\nb}{\nabla}
\newcommand{\alp}{\alpha}
\newcommand{\gmm}{\gamma}
\newcommand{\eps}{\epsilon}
\newcommand{\Lmb}{\Lambda}
\newcommand{\Omg}{\Omega}
\newcommand{\bbR}{\mathbb R}
\newcommand{\bbT}{\mathbb T}
\newcommand{\calL}{\mathcal L}
\newcommand{\calS}{\mathcal S}
\newcommand{\calT}{\mathcal T}
\newcommand{\whr}{\widehat{\rho}}			 
\newcommand{\owhr}{\overline{\widehat{\rho}}}			 
\newcommand{\tu}{\tilde{u}}					
\newcommand{\trho}{\tilde{\rho}}
\newcommand{\R}{\mathbb R}
\newcommand{\e}{\varepsilon}
\newcommand{\bq}{\begin{equation}}
\newcommand{\eq}{\end{equation}}
\begin{document}

\title{Classical solutions for fractional porous medium flow}
\author{Young-Pil Choi\thanks{Department of Mathematics, Yonsei University, Seoul 03722, Republic of Korea. E-mail: ypchoi@yonsei.ac.kr} \and In-Jee Jeong\thanks{Department of Mathematics, Korea Institute for Advanced Study, Seoul 02455, Republic of Korea. E-mail: ijeong@kias.re.kr}}
\date{\today}



\maketitle


\begin{abstract}
	We consider the fractional porous medium flow introduced by Caffarelli and Vazquez (\cite{CaVa}) and obtain local in time existence, uniqueness, and blow-up criterion for smooth solutions.  The proof is based on establishing a commutator estimate involving fractional Laplacian operators.  
\end{abstract} 


\section{Introduction}

\subsection{Main result}

In this paper, we consider the Cauchy problem for the fractional porous medium flow, possibly with dissipation:
\begin{equation}  \label{eq:Euler-first}
\begin{aligned}
&\rd_t \rho + \nb\cdot (\rho u) = \nu \lap\rho, \quad  u = c_K\Lmb^{\alp-d} \nb\rho 
\end{aligned} 
\end{equation} for $\rho(t,\cdot):\Omg \rightarrow \bbR_+$ where $\Omg = \bbR^d$ or $\bbT^d$. Here, $\nu \ge 0$, $c_K\in\bbR$, and $-2 \le \alp-d \le 0$ are parameters, and $\Lmb^s$ is the $s-$fractional power of $\Lmb:=(-\lap)^{\frac{1}{2}}$, to be defined precisely below. Our main result is local well-posedness of classical solutions for \eqref{eq:Euler-first}, under appropriate assumptions on the parameters.
\begin{theorem}\label{thm:lwp}
	The fractional porous medium equation with dissipation is locally well-posed for $-2\le \alp-d \le 0$ in each of the following cases:
	\begin{enumerate}
		\item (Repulsive and inviscid case) For $c_K < 0$ and $\nu=0$, \eqref{eq:Euler-first} is locally well-posed in $L^1\cap H^s(\Omg)$ with $s> \frac{d}{2}+3$. More precisely, given any non-negative initial data $\rho_0 \in L^1\cap H^s(\Omg)$, there exists $T=T(\rho_0)>0$ and a unique non-negative solution $\rho \in C([0,T);L^1\cap H^s(\Omg))$ of \eqref{eq:Euler-first} with $\rho(t=0)=\rho_0$. 
		\item (Viscous case) For $c_K \in \bbR$ and $\nu>0$, \eqref{eq:Euler-first} is locally well-posed in $L^1\cap H^s(\Omg)$ with $s> \frac{d}{2}+2$; for any $\rho_0 \in L^1\cap H^s(\Omg)$, there exists $T=T(\rho_0)>0$ and a unique solution $\rho \in C([0,T);L^1\cap H^s(\Omg))$. In the case $\alp-d=0$, $c_K>0$, we need an additional smallness condition on the initial data; $\nrm{\rho_0}_{L^\infty} < c\nu/c_K$, where $c>0$ is an absolute constant.
	\end{enumerate}
 We have the following blow-up criteria. The unique local solution in $L^\infty([0,T);H^s(\Omg))$ can be continued past $T>0$ if and only if \begin{equation*}
\begin{split}
\limsup_{t<T}\int_0^t \nrm{ |\xi|^2 ( 1 + |\xi|)\whr(\tau,\xi) }_{L^1_\xi} \, d\tau < \infty
\end{split}
\end{equation*} and
\begin{equation*}
\begin{split}
\limsup_{t<T}\int_0^t \nrm{ |\xi|( 1 + |\xi|)\whr(\tau,\xi) }^2_{L^1_\xi} \, d\tau < \infty. 
\end{split}
\end{equation*} in Cases 1 and 2, respectively. Here $\whr(\tau,\cdot)$ is the Fourier transform of $\rho(\tau,\cdot)$. In particular, in both cases, if $\rho_0\ge0$ and $\rho_0 \in L^1 \cap (\cap_{s\ge 0} H^s)(\Omg)$, then the unique solution belongs to $\rho \in C^\infty([0,T)\times\Omg)$ for some $T>0$. 
\end{theorem}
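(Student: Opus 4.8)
The plan is to derive all the claims from a single a priori differential inequality of the form $\frac{d}{dt}\nrm{\rho(t)}_{H^s}^2\le G(t)\nrm{\rho(t)}_{H^s}^2$, in which $G$ is essentially the integrand appearing in the relevant blow-up criterion; local existence then follows from a regularization argument, the blow-up criteria are immediate, and the persistence of $C^\infty$ regularity comes from bootstrapping the same inequality for every $s$. To set up the estimate, write $\beta:=\alpha-d\in[-2,0]$ and $p:=\Lambda^{\beta}\rho$, so that \eqref{eq:Euler-first} becomes $\partial_t\rho+c_K\nabla\cdot(\rho\nabla p)=\nu\Delta\rho$. Applying $\Lambda^s$, pairing with $\Lambda^s\rho$ in $L^2$, and integrating by parts gives
\[
\tfrac12\frac{d}{dt}\nrm{\Lambda^s\rho}_{L^2}^2=c_K\int\Lambda^s(\rho\nabla p)\cdot\nabla\Lambda^s\rho\,dx-\nu\nrm{\Lambda^{s+1}\rho}_{L^2}^2 .
\]
Writing $\Lambda^s(\rho\nabla p)=\rho\,\Lambda^s\nabla p+[\Lambda^s,\rho]\nabla p$ and symmetrizing the principal term by means of $\Lambda^{\beta}=\Lambda^{\beta/2}\Lambda^{\beta/2}$, one extracts the degenerate dissipation $c_K\int\rho\,|\Lambda^{\beta/2}\nabla\Lambda^s\rho|^2\,dx$, which has the good sign when $c_K<0$ because $\rho\ge0$, together with the genuine dissipation $-\nu\nrm{\Lambda^{s+1}\rho}_{L^2}^2$ and a collection of commutator remainders. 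When $\nu>0$ and $c_K>0$ the degenerate term is of order at most $s+1$: if $\beta<0$ it is absorbed into $\nu\nrm{\Lambda^{s+1}\rho}_{L^2}^2$ after interpolation and Young's inequality at no cost, while if $\beta=0$ it sits exactly at the level of the viscous dissipation and absorption forces the smallness hypothesis $c_K\nrm{\rho}_{L^\infty}<c\nu$, which is propagated in time by the maximum principle once it holds at $t=0$.

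The \emph{crux} is the control of the commutator remainders — and of the lower-order parts of the dissipative terms — all of which have the schematic form $\int\big([\Lambda^{\sigma},\rho]\,\Lambda^{\gamma}\rho\big)\,\Lambda^{\sigma'}\rho\,dx$ with $\sigma,\sigma'\le s$ and $\gamma\le\beta+2\le2$. A direct application of the Kato--Ponce commutator estimate leaves behind a factor $\nrm{\rho}_{\dot H^{s+1}}$, which is simply not available in Case~1 and is wasteful in Case~2. The key technical input is a commutator estimate for fractional Laplacians — proved using the pointwise representation $[\Lambda^{\sigma},\rho]g(x)=c_{d,\sigma}\,\mathrm{p.v.}\!\int\frac{(\rho(x)-\rho(y))\,g(y)}{|x-y|^{d+\sigma}}\,dy$ for the low-order factors, a further integration by parts, and a symmetrization of the resulting double integral (equivalently, exploiting the cancellation in the Fourier symbol $|\xi|^{2s}-|\eta|^{2s}\lesssim|\xi-\eta|(|\xi|+|\eta|)^{2s-1}$ in the trilinear form) — which trades the apparent derivative loss for a low-order factor and yields
\[
\Big|\int\big([\Lambda^{\sigma},\rho]\,\Lambda^{\gamma}\rho\big)\,\Lambda^{\sigma'}\rho\,dx\Big|\lesssim\big(\nrm{\rho}_{L^\infty}+\nrm{\,|\xi|^{m}(1+|\xi|)\whr\,}_{L^1_\xi}\big)\nrm{\rho}_{H^s}^2 ,
\]
with $m=2$ in Case~1 and $m=1$ in Case~2; in Case~2 the order-$(s+1)$ pieces are first separated off and absorbed into $\nu\nrm{\Lambda^{s+1}\rho}_{L^2}^2$ by Young's inequality, which is the reason the Fourier-side norm enters \emph{squared} there. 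Combining with the energy identity produces the master inequality $\frac{d}{dt}\nrm{\rho(t)}_{H^s}^2\le G(t)\nrm{\rho(t)}_{H^s}^2$ with $G=C\big(\nrm{\rho}_{L^\infty}+\nrm{\,|\xi|^2(1+|\xi|)\whr\,}_{L^1_\xi}\big)$ in Case~1 and $G=C\nu^{-1}\nrm{\,|\xi|(1+|\xi|)\whr\,}_{L^1_\xi}^2$ in Case~2, where $\nrm{\rho}_{L^\infty}$ is itself dominated by the conserved $\nrm{\rho}_{L^1}$ plus the displayed Fourier-side quantity. I expect this commutator estimate, together with the bookkeeping needed to verify that every remainder (including the endpoint exponents $\beta=0$ and $\beta=-2$) really does fit this template, to be the \emph{main obstacle}; the rest is fairly standard.

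For existence, I would construct solutions by regularizing the nonlinearity (mollification, or a vanishing extra viscosity, combined with frequency truncation). The regularized problems are globally well-posed, preserve non-negativity — by the maximum principle when $\nu>0$, and via the characteristic formula $\rho(t,X_t)=\rho_0(X_0)\exp\!\big(-\int_0^t\nabla\cdot u\,d\tau\big)$ when $\nu=0$ — and satisfy $\frac{d}{dt}\nrm{\rho}_{L^1}\le0$; the master inequality then furnishes uniform $L^1\cap H^s$ bounds on a common interval $[0,T]$ with $T=T(\nrm{\rho_0}_{L^1\cap H^s})$. Since the equation controls $\partial_t\rho$ in a lower Sobolev norm, Aubin--Lions gives strong convergence in $H^{s'}$ for $s'<s$, enough to pass to the limit in the nonlinear term; the limit lies in $C_w([0,T);H^s)$ and, after upgrading the energy estimate to an identity in the spirit of Bona--Smith, in $C([0,T);L^1\cap H^s)$. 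Uniqueness and continuous dependence follow from an $L^2$ (or low-order $H^{\sigma_0}$) estimate for the difference of two solutions, whose coefficients are controlled by their $H^s$ norms, together with Gr\"onwall's inequality.

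Finally, the blow-up criteria are read off directly: if the time integral of $G$ over $[0,T)$ is finite — which is exactly the stated condition, since $\nrm{\,|\xi|^k\whr\,}_{L^1_\xi}$ dominates the $L^\infty$ norm of the corresponding derivative of $\rho$ — then Gr\"onwall bounds $\nrm{\rho(t)}_{H^s}$ up to $T$ and re-solving from a time close to $T$ extends the solution, while conversely a solution that extends past $T$ belongs to $C([0,T];H^s)$, so the integral is finite. For the persistence of smoothness, take $\rho_0\ge0$ with $\rho_0\in L^1\cap(\cap_{s\ge0}H^s)$ and let $[0,T)$ be the lifespan of the solution at some $s_0$ above the threshold ($s_0>\frac d2+3$ in Case~1, $s_0>\frac d2+2$ in Case~2); on $[0,T)$ the quantity $G$ is controlled by $\nrm{\rho}_{H^{s_0}}$ and hence is finite on compact subintervals, so the master inequality applied for each larger $s$ gives $\sup_{[0,T']}\nrm{\rho(t)}_{H^s}<\infty$ for every $T'<T$ and every $s$, whence $\rho\in C([0,T);H^s)$ for all $s$. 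Differentiating the equation in time then places every $\partial_t^k\rho$ in $C([0,T);H^s)$ for all $s$, and Sobolev embedding gives $\rho\in C^\infty([0,T)\times\Omega)$, with non-negativity preserved as in the construction step.
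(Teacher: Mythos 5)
Your overall architecture coincides with the paper's: an $H^s$ energy estimate in which the degenerate quadratic term $c_K\int\rho\,|\Lambda^{(\alpha-d)/2}\nabla\Lambda^s\rho|^2\,dx$ is extracted and used via its sign (Case 1) or absorbed into $\nu\Vert\rho\Vert_{\dot H^{s+1}}^2$ with smallness at the endpoint $\alpha-d=0$ (Case 2), followed by a regularization scheme for existence, a Gr\"onwall argument for the blow-up criteria, bootstrapping in $s$ for smoothness, and a low-norm estimate for the difference of two solutions. The problem is that the one step you yourself flag as the main obstacle --- the commutator/trilinear estimate --- is exactly the content of the paper, and the mechanism you indicate for it would not close. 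The cancellation you invoke, $\bigl||\xi|^{2s}-|\eta|^{2s}\bigr|\lesssim|\xi-\eta|(|\xi|+|\eta|)^{2s-1}$ (equivalently, the first-order gain coming from the factor $\rho(x)-\rho(y)$ in the kernel representation of $[\Lambda^{\sigma},\rho]$), gains only one power of the low frequency $|\xi-\eta|$. Here the velocity $u=c_K\Lambda^{\alpha-d}\nabla\rho$ is up to one full derivative more singular than $\rho$ and is \emph{not} divergence-free, so after a single symmetrization the worst terms still carry a derivative excess of $1-2b$, $b=(d-\alpha)/2$, which cannot be shifted onto the low-frequency factor whenever $b<\tfrac12$, i.e.\ for $-1<\alpha-d\le 0$ --- including the critical endpoint $\alpha-d=0$. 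This is precisely the point of Remark 2.1: the estimate of \cite{CCCGW} (which is what your sketch reproduces) suffices only in the divergence-free active-scalar setting, and the present equation requires subtracting the \emph{next-order} term of the commutator, as in \eqref{eq:comm}, i.e.\ a genuinely second-order expansion of $[\Lambda^{-b},f\nabla]$.

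Concretely, the paper implements this on the Fourier side: the kernel $G=|\xi|^{2s}\,\xi\cdot\eta\,|\eta|^{-2b}$ is split as $G_0+G_1+G_s+$ remainder, where the remainder is controlled by the second-order Taylor-type expansion of $|\xi|^s$ in Lemma \ref{lem_aux} (which produces the needed factor $|\xi-\eta|^2$), the piece $G_1$ (the image of the subtracted symbol $b\,\nabla f\cdot\nabla\Lambda^{-b-2}\nabla$) is tamed by a further anti-symmetry, and even the ``principal'' piece $G_0$ requires the double-difference factorization through $(|\xi|^{b/2}-|\eta|^{b/2})(|\xi|^{b}-|\eta|^{b})$ to extract two powers of $|\xi-\eta|$ before the sign-definite term emerges. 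None of this is supplied, or even hinted at beyond a first-order cancellation, in your proposal, so the master inequality with the stated right-hand sides ($m=2$, resp.\ $m=1$) is not justified. A secondary gap of the same nature appears in your uniqueness step: a plain $L^2$ (or fixed low-order) estimate for $g=\rho-\tilde\rho$ with ``coefficients controlled by $H^s$ norms'' does not close, because the term $\int\nabla\cdot(\tilde\rho\,v)\,g\,dx$ with $v=c_K\Lambda^{\alpha-d}\nabla g$ again loses a derivative; the paper must run an $H^1$ estimate for $g$, extract the sign-definite quantity $c_K\int\tilde\rho\,|\nabla\partial\Lambda^{-b}g|^2\,dx$ (using $\tilde\rho\ge0$, $c_K<0$), and reuse the same second-order commutator device to control the remainder.
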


\subsection{Previous works}

The \textit{fractional porous medium flow} \begin{equation}\label{eq:FPME}
\begin{split}
\rd_t\rho + \nb\cdot(\rho u) = 0,\quad u = -\Lmb^{ \alp-d}\nb\rho 
\end{split}
\end{equation} was introduced in \cite{CaVa} and further studied in \cite{CSV}, exactly in the range $-2<\alp-d<0$. As we shall see later (\ref{subsec:remarks}), both endpoints $\alp-d = -2$ and $\alp-d = 0$ are \textit{critical}, for different reasons. The parameter $\alp$ dictates the decay rate of the interaction kernel $\Lmb^{ \alp-d}$, which is simply a constant multiple of $|x|^{-\alp}$. This allows one to consider long-range interactions. The authors in \cite{CaVa,CSV} proved the existence of mass preserving weak solutions, finite propagation property, boundedness and $C^\alpha$ regularity for non-negative and integrable initial data $\rho_0$. There was no mention on the existence of strong (or classical) solutions in \cite{CaVa,CSV}.  {
More recently, the existence of weak solutions and other properties such as entropy, regularizing effect, and decay estimates were established in \cite{LMS18} by employing a gradient flow approach.

Time-asymptotic behaviors in terms of its self-similar solutions are investigated in \cite{CV11, CHSV15}. The self-similar profile, so called {\it fractional Barenblatt profile}, was characterized in \cite{CV11} by solving an elliptic obstacle problem. In \cite{CHSV15}, the one--dimensional case was considered, and  exponential convergence of solutions in self-similar variables to the unique stationary state (explicitly constructed in \cite{BIK15}) is obtained. 
}

Rather recently, there has been a growing interest in the general systems of the form 
{
\begin{equation}\label{eq:general}
\begin{split}
\rd_t\rho + \nb\cdot(\sigma(\rho) u) = 0,\quad u = - \nb \calL[\rho]
\end{split}
\end{equation} where $\sigma$ and $\calL$ are given by possibly non-linear and/or non-local operators. The equation \eqref{eq:general} describes the propagation of the particle density $\rho(t,\cdot)$ in a medium, with interactions governed by $\calL$ and mobility $\sigma$ of the reference system. Motivations for studying \eqref{eq:general}  for various $\calL$, which include phase separation of lattice active matter, granular flow, biological swarming, pattern formation,  can be found in the works \cite{CCCSS16,CMV03, GL1,GL2,GL3,GL4, LT04, ME99,STV19}. We finally refer to \cite{CCY19,Va17} and the references therein for a  general discussion on nonlinear drift-diffusion models.
}

While previous mathematical works were mainly focused on the regularity of suitably defined weak solutions, it is natural to ask whether higher spatial regularity propagate in time, with smooth initial data. Moreover, the existence of smooth solutions seems to be a necessary condition for a rigorous derivation of \eqref{eq:FPME} from more primitive systems. In our companion work \cite{CJ-limit}, the equation \eqref{eq:Euler-first} was derived rigorously from the damped Euler--Riesz system in the \textit{large friction} limit:
\begin{align}\label{eq:ER}
\left\{
\begin{aligned}
&\rd_t \rho^{(\e)} + \nb \cdot (\rho^{(\e)} u^{(\e)}) = 0, \\
&\rd_t (\rho^{(\e)} u^{(\e)}) + \nabla \cdot ( \rho^{(\e)} u^{(\e)}\otimes u^{(\e)}) +  \frac1\e c_p \nb p(\rho^{(\e)}) =  - \frac1\e \rho^{(\e)} u^{(\e)} + \frac1\e c_K \rho^{(\e)} \nb \Lmb^{\alp-d}\rho^{(\e)},
\end{aligned}
\right.
\end{align} where $p(\rho)=\rho^\gmm$. That is, the sequence of solutions $(\rho^{(\e)},u^{(\e)})$ for \eqref{eq:ER} converges in a sense to \eqref{eq:Euler-first} in the limit $\e\rightarrow0$. Indeed, as $\e\rightarrow 0$, one can formally solve for $u^{(\e)}$ in terms of $\rho^{(\e)}$ from the second equation in \eqref{eq:ER} by keeping only the terms with coefficient $1/\e$, which gives \eqref{eq:Euler-first} without dissipation when $c_p=0$ (pressureless case) or with dissipation of the form $\lap(\rho^\gmm)$ when $c_p>0$. In the proof of convergence, it was essential that we have smooth (at least $u(t,\cdot)\in C^1$) solutions to \eqref{eq:Euler-first}. We refer the interested readers to \cite{CJ-limit} for the precise statement.

\subsection{Remarks}\label{subsec:remarks}

Let us give a few remarks related to the statement of Theorem \ref{thm:lwp}.

\medskip

\noindent \textit{The Riesz operator $\Lmb^s$.}  The operator $\Lmb^s$ for $s\in\bbR$ is defined by the Fourier multiplier with symbol $|\xi|^s$; that is, \begin{equation*}
\begin{split}
\widehat{\Lmb^s f}(\xi) = |\xi|^s \widehat{{f}}(\xi),
\end{split}
\end{equation*} where the Fourier transform is defined by $\widehat{{f}}(\xi) = \int_{\Omg} f(x) e^{-ix\cdot\xi} dx$. For $\Lmb^s f$ to be well-defined as a bounded function for $s<0$, we need $f$ to have zero mean in the case of $\bbT^d$ and have some decay at infinity in the case of $\bbR^d$, respectively. This is the reason for assuming $L^1$ of $\rho$ in Theorem \ref{thm:lwp}: for $d\ge 1$ and $-2\le \alp-d\le 0$, we have $\nrm{ \Lmb^{\alp-d} \nb\rho}_{L^\infty} \le C_{\alp,d}\nrm{ \rho}_{L^1\cap H^s}$ for $ s> \frac{d}{2} + 1$. However, $L^1(\Omg)$ norm of $\rho$ does not enter the a priori estimate in $H^s(\Omg)$, as it only sees $\nabla u$ and its derivatives. 

Note that when $\alp$ becomes small so that $\alp-d<-2$, the kernel decays so slowly that in general we do not have $\Lmb^{\alp-d} \nb\rho\in L^\infty$. The endpoint $\alp-d=-2$ is critical in this regard. 

\medskip

\noindent \textit{Even more singular kernels.} Given local well-posedness in the range $\alp-d\le 0$, one may attempt to extend the same result to the case of more singular kernels, namely when $\alp-d>0$. However, it seems that in the inviscid and repulsive case, the equation is \textit{ill-posed} in Sobolev spaces\footnote{It means that there exist initial data in $H^s$ for large $s$ such that there is no solution in $L^\infty([0,T];H^s)$ for any $T>0$.}. While rigorously establishing ill-posedness could be a challenge, heuristically one can easily see it from the $H^s$-energy estimate (taking $d=1$ for simplicity): \begin{equation*}
\begin{split}
\frac{1}{2}\frac{d}{dt} \int_{\Omg} |\rd^s\rho|^2 \, dx - c_K \int_{\Omg}  \rho |\Lmb^b \nb(\rd^s\rho)|^2\,dx = C_b\int_{\Omg} \lap\rho |\Lmb^b(\rd^s\rho)|^2\, dx + \mbox{lower order terms}.
\end{split}
\end{equation*} More details of this computation can be found at Subsection \ref{subsec:formal} below. Then, one can arrange initial data $\rho_0$ so that the first term on the right hand side dominates the second term on the left hand side, at least at the initial time. 

\medskip

\noindent \textit{The case of degenerate/singular diffusion.} One may consider the following variant of \eqref{eq:Euler-first} with some $\gmm>0$: \begin{equation}  \label{eq:Euler-first-gmm}
			\begin{aligned}
				&\rd_t \rho + \nb\cdot (\rho u) = \nu \lap(\rho^\gmm), \quad  u = c_K\Lmb^{\alp-d} \nb\rho.
			\end{aligned} 
		\end{equation} The convergence theorem in the companion work \cite{CJ-limit} covers \eqref{eq:Euler-first-gmm} as the limiting system, assuming that smooth solutions to \eqref{eq:Euler-first-gmm} exist. Unfortunately, there is a serious difficulty in obtaining $H^s$ estimates in the case $\gmm \ne1$, in terms of the variable $\rho$. This issue disappears when $\rho$ attains a positive lower bound: namely, when there exists a constant $c_0>0$ such that $\nrm{\rho - c_0}_{L^\infty} \le \frac{c_0}{2}$ and $\rho - c_0 \in H^s(\Omg)$. Then, local well-posedness in the viscous case can be extended to $c_K\in \bbR$ and $\gmm\ge 1$ without much difficulty.  

\section{Ideas and Proofs}

Let us emphasize that local well-posedness for \eqref{eq:Euler-first} does not follow directly from simple energy estimates, see Remark \ref{rmk:comm} for more details. In Subsection \ref{subsec:formal}, we provide a heuristic discussion for the proof of local well-posedness with a little bit of pseudo-differential calculus. The actual proof, given in Subsection \ref{subsec:proof}, is completely self-contained and does not require any results from the theory of pseudo-differential calculus.

\subsection{Formal discussion}\label{subsec:formal}

We consider the $H^m$-estimate for a solution of \eqref{eq:Euler-first}:
\begin{equation*}
\begin{split}
\frac{1}{2} \frac{d}{dt} \int_{\Omg} |\rd^m\rho|^2\,dx + \nu\int_{\Omg} |\rd^m\nb\rho|^2 \,dx =  \int_{\Omg} \nb ( \rd^m\rho  )\cdot \rd^m (\rho u) \, dx . 
\end{split}
\end{equation*} We need estimate the right hand side in terms of Sobolev norms for $\rho$. We inspect the terms which are potentially problematic; those arise when either all or almost all derivatives hit $u$ in the expression $\rd^m(\rho u)$. 
\begin{itemize}
	\item Principal term: with $b = (d-\alp)/2 \ge 0$, \begin{equation*}
	\begin{split}
	c_K\int_{\Omg} \rho\nb(\rd^m\rho) \cdot \Lmb^{\alp-d} \nb(\rd^m\rho)  \, dx = c_K \int_{\Omg} \rho |\Lmb^{-b}\nb (\rd^m\rho)|^2 \, dx + c_K \int_{\Omg} [ \Lmb^{-b}, \rho ](\nb\rd^m\rho) \cdot \Lmb^{-b} \nb(\rd^m\rho)\, dx .
	\end{split}
	\end{equation*}
	When $c_K >0$ and $\nu = 0$, we expect the system to be ill-posed. On the other hand, if $\nu>0$, this principal term could be controlled by $\int_{\Omg} |\rd^m\nb\rho|^2\,dx$ when $b>0$. However, for $b = 0$, we need smallness of $\rho$. We now move on to the inviscid case, assuming $c_K<0$. Then, the commutator term can be bounded in terms of $\rho$ in $H^m$ when $b\ge 1$, and simply vanishes for $b = 0$. When $0<b<1$, we recall from pseudo-differential calculus that (assuming that $\rho \in C^\infty$ and decays fast at infinity) \begin{equation*}
	\begin{split}
	\sigma([\Lmb^{-b},\rho]) = \frac{1}{i} \nb_x\rho \cdot \nb_\xi |\xi|^{-b} + q(x,\xi),\quad q \in \calS^{-b-2} 
	\end{split}
	\end{equation*} and hence the main term in the commutator $[\Lmb^{-b},\rho]$ is given by $b \nb\rho \cdot \Lmb^{-b-2}\nb$. (Here, $\calS^{a}$ denotes the class of symbols of order $a$; we say $p\in \calS^a$ if $|\rd_x^n\rd_\xi^m p|(x,\xi) \lesssim_{n,m} \brk{\xi}^{a-m}$ with $\brk{\xi}=\sqrt{1+|\xi|^2}$ for all $n,m\ge0$.)  Therefore, \begin{equation*}
	\begin{split}
	&\int_{\Omg} [ \Lmb^{-b}, \rho ](\nb\rd^m\rho) \cdot \Lmb^{-b} \nb(\rd^m\rho)\, dx \\
	&\quad = b\int_{\Omg} \nb\rho\cdot\Lmb^{-b-2}\nb (\nb\rd^m\rho) \cdot \Lmb^{-b} \nb(\rd^m\rho)\, dx + \int_{\Omg} q(X,D)(\nb\rd^m\rho) \cdot \Lmb^{-b} \nb(\rd^m\rho)\, dx,
	\end{split}
	\end{equation*} and the latter term can be bounded by \begin{equation*}
	\begin{split}
	\left|  \int_{\Omg} q(X,D)(\nb\rd^m\rho) \cdot \Lmb^{-b} \nb(\rd^m\rho)\, dx \right|\le C\nrm{\rho}_{H^{m-b}}^2
	\end{split}
	\end{equation*} since $q \in \calS^{-b-2} $. After an integration by parts, the first term is given by \begin{equation*}
	\begin{split}
	-b\int_{\Omg}  \nb \cdot (\nb\rho \cdot \Lmb^{-b-2}\nb (\nb\rd^m\rho) ) \Lmb^{-b}  \rd^m\rho \, dx & = b\int_{\Omg} \nb\rho \cdot \Lmb^{-b}(\nb \rd^m\rho) \, \Lmb^{-b}(\rd^m\rho) \, dx + o.k. \\
	& = -\frac{b}{2} \int_{\Omg} \lap\rho |\Lmb^{-b}(\rd^m\rho)|^2 \, dx + o.k.
	\end{split}
	\end{equation*}
	
	\item Sub-principal term: \begin{equation*}
	\begin{split}
	c_K\int_{\Omg} \rd\rho  \nb\rd^m\rho \cdot \nb\Lmb^{-2b} \rd^{m-1}\rho \, dx & = -\frac{c_K}{2} \int_{\Omg} \rd^2\rho |\Lmb^{-b}\nb\rd^{m-1}\rho|^2 \, dx \\
	&\quad + c_K \int_{\Omg} [\Lmb^{-b},\rd\rho] (\rd\nb\rd^{m-1}\rho) \Lmb^{-b} (\nb\rd^{m-1}\rho) .
	\end{split}
	\end{equation*} We see that (formally) both terms can be bounded in terms of $\rho$ in $H^m$. There is another sub-principal term, which is \begin{equation*}
	\begin{split}
	\int_{\Omg} \nb(\rd^m\rho) \cdot u \rd^m\rho\,dx. 
	\end{split}
	\end{equation*} However, this is easily estimated after an integration by parts. 
\end{itemize}

\begin{remark}\label{rmk:comm}
	The above discussion is made precise in the proof below. The essence of the argument can be summarized by the following estimate: \begin{equation}\label{eq:comm}
\begin{split}
\nrm{ ([\Lmb^{-b},f \nb] - b(\nb f \cdot\nb )\Lmb^{-b-2}\nb)g }_{L^2} \le C_{d,b,\eps} \nrm{f}_{H^{\frac{d}{2}+3+\eps}} \nrm{g}_{H^{-b-1}}
\end{split}
\end{equation} for any $\eps>0$. This can be viewed as an extension of the following commutator estimate proved in \cite[Proposition 2.1]{CCCGW}: \begin{equation*}
\begin{split}
\nrm{ [\Lmb^{-b},f \nb]  g }_{L^2} \le C_{d,b,\eps} \nrm{f}_{H^{\frac{d}{2}+1-b+\eps}} \nrm{g}_{H^{-b}}.
\end{split}
\end{equation*} 
The authors needed this estimate in order to prove local well-posedness of an active scalar equation, in which the velocity is more singular than the scalar. Indeed, in their model the relation between $u$ and $\rho$ is given by $u = \nb^\perp\Lmb^{\alp-d}\rho$ in 2D with $\nb^\perp = (-\rd_{x_2},\rd_{x_1})$. However, since this velocity is divergence-free, one obtains a cancellation in the expression $\nb\cdot(\rho u)$ for the potentially most delicate term. In this sense, \eqref{eq:Euler-first} is more singular by order 1, which is basically the reason we need to extract the next-order term from the commutator $[\Lmb^{-b},f\nb]$. 
\end{remark}

\subsection{Proof}\label{subsec:proof}

\begin{proof}[Proof of Theorem \ref{thm:lwp}]
	
	We begin with obtaining a priori estimates for the solution of \eqref{eq:Euler-first}, in each of the cases 1 and 2. In the proof, we set $0 \le  b = (d-\alp)/2 \le 1$ and focus on the case $0<b<1$. The endpoint cases $b = 0$ and $b = 1$ can be treated separately without any additional difficulties.
	
	\medskip
	
	\noindent\textit{Case 1. $c_K<0$ and $\nu=0$.}
	
	\medskip 
	
	\noindent Let us consider the case $\Omg = \bbR^d$. The proof readily extends to the $\bbT^d$ case. We fix some $s > \frac{d}{2} + 3$ and compute \begin{equation*}
	\begin{split}
	\frac{d}{dt} \frac{c_d}{2} \nrm{\rho}_{\dot{H}^s}^2 & = - \Re \int_{\bbR^d} |\xi|^s\owhr (\xi) \, |\xi|^s \int_{\bbR^d} i\xi\cdot \widehat{u}( \eta) \whr(\xi-\eta) \, d\eta \, d\xi \\
	& =  c_K \Re \iint_{\bbR^d \times \R^d}   |\xi|^s\owhr (\xi) \, |\xi|^s \xi\cdot\eta |\eta|^{-2b} \whr (\eta)  \whr(\xi-\eta) \, d\eta \, d\xi,
	\end{split}
	\end{equation*} where we have simply used the definition of $u$ and $b$. (Here, $c_d>0$ is an absolute constant coming from the definition of the Fourier transform and $\dot{H}^s$-norm.) For convenience, we shall define the operator $\calT$ as follows: \begin{equation*}
	\begin{split}
	\calT[ G(\xi,\eta) ] = \Re \iint_{\R^d \times \R^d}  G(\xi,\eta) \owhr (\xi)  \whr (\eta)  \whr(\xi-\eta) \, d\eta \, d\xi.
	\end{split}
	\end{equation*} An important observation is that if $G$ is real and anti-symmetric, that is, $G(\eta,\xi)=-G(\xi,\eta)$ for all $\xi,\eta \in \bbR^d$, we have that $\calT[G] = 0$. To see this, we simply make a change of variables $(\eta,\xi)\mapsto (\xi,\eta)$: \begin{equation*}
	\begin{split}
	\calT[G] & = \Re\iint_{\R^d \times \R^d}   G(\eta,\xi) \owhr (\eta)  \whr (\xi)  \whr(\eta-\xi) \, d\eta \, d\xi \\
	& = \Re \iint_{\R^d \times \R^d}  -G(\xi,\eta) \owhr (\eta)  \whr (\xi)  \whr(\eta-\xi) \, d\eta \, d\xi  \\
	& = \Re \iint_{\R^d \times \R^d}  - \overline{ G(\xi,\eta) \owhr (\eta)  \whr (\xi)  \whr(\eta-\xi) } \, d\eta \, d\xi \\
	& = \Re \iint_{\R^d \times \R^d} -G(\xi,\eta) \owhr (\xi)  \whr (\eta)  \whr(\xi-\eta) \, d\eta \, d\xi = - \calT[G].
	\end{split}
	\end{equation*} In the above, we have used anti-symmetry of $G$ and $\overline{\whr(\eta-\xi)} = \whr(\xi-\eta)$. 
	
	\medskip

	\noindent From now on, we shall consider several cases of $G$. 
	\begin{itemize}
		\item $G_s = |\xi|^s |\xi-\eta|^s \xi\cdot\eta |\eta|^{-2b}$. In this case, it is convenient to make a change of variables $\eta = \xi-\mu$ (for fixed $\xi$): then, \begin{equation*}
		\begin{split}
		\calT[G_s] & = \Re \iint_{\R^d \times \R^d} |\xi|^s \overline{\whr(\xi)} |\mu|^s \whr(\mu) \xi\cdot(\xi-\mu)|\xi-\mu|^{-2b} \whr(\xi-\mu) \, d\mu\, d \xi \\
		& = \frac{1}{2}\Re\iint_{\R^d \times \R^d} |\xi|^s \overline{\whr(\xi)} |\mu|^s \whr(\mu) |\xi-\mu|^{2-2b} \whr(\xi-\mu) \, d\mu\, d \xi , 
		\end{split}
		\end{equation*} since \begin{equation*}
		\begin{split}
		\Re \iint_{\R^d \times \R^d} |\xi|^s \overline{\whr(\xi)} |\mu|^s \whr(\mu) (\xi + \mu)\cdot(\xi-\mu)|\xi-\mu|^{-2b} \whr(\xi-\mu) \, d\mu\, d \xi = 0
		\end{split}
		\end{equation*} by the anti-symmetry of the kernel. Then, we can write \begin{equation*}
		\begin{split}
		\xi\cdot (\xi-\mu) = \frac{1}{2}|\xi-\mu|^2 + \frac{1}{2}(\xi+\mu)\cdot(\xi-\mu). 
		\end{split}
		\end{equation*} Therefore, we conclude the bound \begin{equation}\label{eq:TG1}
		\begin{split}
		\left| \calT[G_s] \right| \le C \nrm{ |\xi|^{2-2b}\whr }_{L^1_\xi}\nrm{|\xi|^s\whr}_{L^2}^2.
		\end{split}
		\end{equation} 
		\item $G_0 = |\xi|^s |\eta|^s \xi\cdot\eta |\eta|^{-2b}$. In this case, we rewrite \begin{equation*}
		\begin{split}
		G_0 & = |\xi|^{s-b}|\eta|^{s-2b} ( |\xi|^b - |\eta|^b ) \xi\cdot\eta + (|\xi||\eta|)^{s-b}\xi\cdot\eta \\
		& = |\xi|^{s-\frac{3}{2}b}|\eta|^{s-\frac{3}{2}b} ( |\xi|^b - |\eta|^b ) \xi\cdot\eta + |\xi|^{s-\frac{3}{2}b}|\eta|^{s-2b} ( |\xi|^{\frac{b}{2}} - |\eta|^{\frac{b}{2}} ) ( |\xi|^b - |\eta|^b ) \xi\cdot\eta + (|\xi||\eta|)^{s-b}\xi\cdot\eta.
		\end{split}
		\end{equation*} This gives \begin{equation*}
		\begin{split}
		\calT[G_0] = \calT[G'_0] + c_K\int_{\R^d} \rho | \Lmb^{s-b} \nb\rho|^2\, dx 
		\end{split}
		\end{equation*} with \begin{equation*}
		\begin{split}
		G'_0(\xi,\eta):= |\xi|^{s-\frac{3}{2}b}|\eta|^{s-2b} ( |\xi|^{\frac{b}{2}} - |\eta|^{\frac{b}{2}} ) ( |\xi|^b - |\eta|^b ) \xi\cdot\eta ,
		\end{split}
		\end{equation*} since $|\xi|^{s-\frac{3}{2}b}|\eta|^{s-\frac{3}{2}b} ( |\xi|^b - |\eta|^b ) \xi\cdot\eta $ is anti-symmetric in $(\xi,\eta)$. We write
		\begin{equation*}
		\begin{split}
		|\xi|^b - |\eta|^b = b(\xi-\eta) \cdot \int_0^1 (a\xi + (1-a)\eta) |a\xi + (1-a)\eta|^{b-2} \, da 
		\end{split}
		\end{equation*} so that \begin{equation}\label{eq:b-b}
		\begin{split}
		\left| |\xi|^b - |\eta|^b \right| \le C_b|\xi-\eta|  \max\{ |\xi|^{b-1}, |\eta|^{b-1} \}.
		\end{split}
		\end{equation} Repeating a similar argument for $|\xi|^{\frac{b}{2}} - |\eta|^{\frac{b}{2}}$, we obtain that \begin{equation*}
		\begin{split}
		\left| ( |\xi|^{\frac{b}{2}} - |\eta|^{\frac{b}{2}} ) ( |\xi|^b - |\eta|^b ) \right| \le C_b|\xi-\eta|^2  \max\{ |\xi|^{\frac{3}{2}b-2}, |\eta|^{\frac{3}{2}b-2} \}.
		\end{split}
		\end{equation*} We need to consider separately the cases $|\xi|\le|\eta|$ and $|\xi|>|\eta|$: \begin{equation*}
		\begin{split}
		G_0' = G_0' \mathbf{1}_{ |\xi|\le|\eta |} +  G_0' \mathbf{1}_{ |\xi|>|\eta |}.
		\end{split}
		\end{equation*}  {First, in the case $|\xi|\le|\eta |$, we have 
		\[
		\max\{ |\xi|^{\frac{3}{2}b-2}, |\eta|^{\frac{3}{2}b-2} \} = |\xi|^{\frac{3}{2}b-2},
		\] 
		which gives \begin{equation*}
		\begin{split}
		\left| G_0' \mathbf{1}_{ |\xi|\le|\eta |} \right| (\xi,\eta) \le C_b |\xi|^{s-1}|\eta|^{s+1-2b} |\xi-\eta|^2,
		\end{split}
		\end{equation*} and when $b < \frac{1}{2}$, we can further estimate \begin{equation*}
		\begin{split}
		\left| G_0' \mathbf{1}_{ |\xi|\le|\eta |} \right| (\xi,\eta) \le C_b |\xi|^{s-1}|\eta|^{s} |\xi-\eta|^2 ( |\xi|^{1-2b} + |\xi-\eta|^{1-2b} ).
		\end{split}
		\end{equation*}  Next, we have similarly in the case $|\xi|>|\eta|$ that \begin{equation*}
		\begin{split}
		\left| G_0' \mathbf{1}_{ |\xi|>|\eta |} \right| (\xi,\eta) \le C_b |\xi|^{s+1-\frac{3}{2}b}|\eta|^{s-1-\frac{b}{2}} |\xi-\eta|^2 
		\end{split}
		\end{equation*} and when $b<\frac{2}{3}$, we can further bound \begin{equation*}
		\begin{split}
		\left| G_0' \mathbf{1}_{ |\xi|>|\eta |} \right| (\xi,\eta) \le C_b |\xi|^{s}|\eta|^{s-1-\frac{b}{2}}  |\xi-\eta|^2 ( |\eta|^{1-\frac{3}{2}b} + |\xi-\eta|^{1-\frac{3}{2}b} ). 
		\end{split}
		\end{equation*} }
		Therefore, for the whole range of $0<b<1$, we obtain the bound \begin{equation*}
		\begin{split}
		|G_0'(\xi,\eta)| \le C_b (1+|\xi|)^s (1+|\eta|)^s |\xi-\eta|^2 ( 1 + |\xi-\eta|),
		\end{split}
		\end{equation*} which results in the estimate \begin{equation*}
		\begin{split}
		\left| \calT[G_0] - c_d\int_{\R^d} \rho |\Lmb^{s-b}\nb\rho|^2\,dx  \right| \le C\nrm{ |\xi|^2 ( 1 + |\xi|)\whr }_{L^1_\xi} \nrm{\rho}_{H^{s}}^2 .
		\end{split}
		\end{equation*} (The constant $C_b>0$ is uniform in the range $b\in[0,1]$, so that the endpoint cases $b = 0$ and $b = 1$ can be covered as well.)

		\item $G_1 = |\xi|^s (s\eta\cdot(\xi-\eta)) (\xi\cdot\eta)|\eta|^{s-2-2b}$. We write \begin{equation*}
		\begin{split}
		G_1 &= |\xi|^s (s\eta\cdot(\xi-\eta)) (\xi-\eta)\cdot\eta|\eta|^{s-2-2b} - \frac{s}{2}|\xi|^s   |\eta|^{s -2b} |\xi-\eta|^2 + \frac{s}{2} |\xi|^s |\eta|^{s-2b} (\eta+\xi)\cdot (\xi-\eta) \\
		& =: G_{11} + G_{12} + G_{13}
		\end{split}
		\end{equation*} so that \begin{equation*}
		\begin{split}
		\left| \calT[G_{11} + G_{12}] \right| \le \nrm{ |\xi|^2 \whr }_{L^1_\xi} \nrm{\rho}_{H^{s}}^2 .
		\end{split}
		\end{equation*} For $G_{13}$, we proceed similarly as in the above; writing \begin{equation*}
		\begin{split}
		G_{13} &= \frac{s}{2} |\xi|^{s-b} |\eta|^{s-b} (\eta+\xi)\cdot (\xi-\eta) + \frac{s}{2} |\xi|^{s-b} |\eta|^{s-2b} (|\xi|^b - |\eta|^b) (\eta+\xi)\cdot (\xi-\eta) \\
		& =: G_{131} + G_{132},
		\end{split}
		\end{equation*} we have $\calT[G_{131}] = 0$ by anti-symmetry and then using \eqref{eq:b-b}, we can estimate \begin{equation*}
		\begin{split}
		\left| \calT[G_{132}] \right| \le  C\nrm{ |\xi|^2 ( 1 + |\xi|)\whr }_{L^1_\xi} \nrm{\rho}_{H^{s}}^2 .
		\end{split}
		\end{equation*} We omit the details, since the proof is completely parallel to the case of $G_0'$ (consider separately the cases $|\xi|\le |\eta|$ and $|\xi|>|\eta|$). The point is that once a factor of $|\xi-\eta|^2$ is extracted, we can bound the remaining factor using $(1+|\xi-\eta|)(1+|\xi|)^s(1+|\eta|)^s$. We have that \begin{equation*}
		\begin{split}
		\left| \calT[G_1] \right| \le C\nrm{ |\xi|^2 ( 1 + |\xi|)\whr }_{L^1_\xi} \nrm{\rho}_{H^{s}}^2 .
		\end{split}
		\end{equation*}
	\end{itemize}

	\medskip

	\noindent {Before proceeding further, we present an auxiliary lemma below whose proof will be given later for a smooth flow of reading.
	\begin{lemma}\label{lem_aux} Let $s \geq 3$. For vectors $\xi, \eta \in \R^d$, there exists $C>0$ independent of $\xi$ and $\eta$ such that 
	\begin{equation}\label{eq:elementary}
	\begin{split}
	\left| |\xi|^s - |\xi-\eta|^s - |\eta|^s - s\eta\cdot(\xi-\eta)|\eta|^{s-2} \right| \le C ( |\xi-\eta|^2|\eta|^{s-2} + |\eta||\xi-\eta|^{s-1} ).
	\end{split}
	\end{equation}
	\end{lemma}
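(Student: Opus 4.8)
The plan is to substitute $\mu := \xi - \eta$ (so $\xi = \eta + \mu$) and reduce \eqref{eq:elementary} to a second-order Taylor estimate for the one-variable function $f(t) := |\eta + t\mu|^s$ on $[0,1]$, followed by a dichotomy according to whether $|\mu| \le |\eta|$ or $|\mu| > |\eta|$. After the substitution the quantity to control is $\bigl|\,|\eta+\mu|^s - |\mu|^s - |\eta|^s - s(\eta\cdot\mu)|\eta|^{s-2}\,\bigr|$ and the target bound is $C\bigl(|\mu|^2|\eta|^{s-2} + |\eta|\,|\mu|^{s-1}\bigr)$; both sides vanish if $\eta = 0$ or $\mu = 0$, so we assume both are nonzero.

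First I would record two elementary one-variable facts. Writing $f(t) = (|\eta|^2 + 2t\,\eta\cdot\mu + t^2|\mu|^2)^{s/2}$, the inner quadratic is a nonnegative polynomial, so for $s \ge 3$ one checks that $f \in C^2([0,1])$ (near a zero $t_0$ of the quadratic that quadratic equals $|\mu|^2(t-t_0)^2$, so $f(t) = |\mu|^s|t-t_0|^s$ there, which is $C^2$ since $s - 2 \ge 1$), with $f'(0) = s|\eta|^{s-2}\,\eta\cdot\mu$ and, by Cauchy--Schwarz, the pointwise bound $|f''(t)| \le s(s-1)|\eta + t\mu|^{s-2}|\mu|^2$. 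Taylor's formula with integral remainder, $f(1) = f(0) + f'(0) + \int_0^1 (1-t) f''(t)\,dt$, then gives
\[
\bigl|\,|\xi|^s - |\eta|^s - s(\eta\cdot\mu)|\eta|^{s-2}\,\bigr| \;\le\; s(s-1)\,|\mu|^2 \int_0^1 |\eta + t\mu|^{s-2}\,dt .
\]
The second fact, proved the same way from $|b + r(a-b)| \le \max(|a|,|b|)$, is $\bigl|\,|a|^s - |b|^s\,\bigr| \le s\,|a-b|\,\max(|a|,|b|)^{s-1}$.

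The case split then finishes it. If $|\mu| \le |\eta|$, then $|\eta + t\mu| \le 2|\eta|$ on $[0,1]$, so the displayed estimate reads $\bigl|\,|\xi|^s - |\eta|^s - s(\eta\cdot\mu)|\eta|^{s-2}\,\bigr| \lesssim_s |\mu|^2|\eta|^{s-2}$; since moreover $|\mu|^s = |\mu|^{s-2}|\mu|^2 \le |\eta|^{s-2}|\mu|^2$, the triangle inequality absorbs the missing $-|\mu|^s$ and yields \eqref{eq:elementary} with only its first right-hand term. If $|\mu| > |\eta|$, I would instead bound each of the four terms on the left directly by a multiple of $|\eta|\,|\mu|^{s-1}$: the second elementary fact with $a = \eta + \mu$, $b = \mu$ gives $\bigl|\,|\eta+\mu|^s - |\mu|^s\,\bigr| \le s\,2^{s-1}|\eta|\,|\mu|^{s-1}$; also $|\eta|^s \le |\eta|\,|\mu|^{s-1}$, and $\bigl| s(\eta\cdot\mu)|\eta|^{s-2}\bigr| \le s|\eta|^{s-1}|\mu| \le s|\eta|\,|\mu|^{s-1}$ (the last step using $|\eta|^{s-2} \le |\mu|^{s-2}$). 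Summing, the left side of \eqref{eq:elementary} is $\lesssim_s |\eta|\,|\mu|^{s-1}$, which is dominated by the right side (indeed in this regime $|\mu|^2|\eta|^{s-2} \le |\eta|\,|\mu|^{s-1}$ as well, using $s \ge 3$). Rewriting $\mu = \xi - \eta$ gives the lemma.

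The only point requiring a little care — the main obstacle, such as it is — is the $C^2$-regularity of $f(t) = |\eta + t\mu|^s$, equivalently the legitimacy of the integral Taylor remainder and of the pointwise bound on $f''$, at parameters where $\eta + t\mu$ vanishes and $s$ is not an even integer; the hypothesis $s \ge 3$ is precisely what makes $|\eta+t\mu|^{s-2}$ and $|\eta+t\mu|^{s-4}\bigl((\eta+t\mu)\cdot\mu\bigr)^2 \le |\eta+t\mu|^{s-2}|\mu|^2$ bounded (in fact vanishing) near such a point, so $f'$ is Lipschitz and all of the above manipulations are justified. Everything else is routine bookkeeping with the two elementary inequalities.
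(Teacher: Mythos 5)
Your argument is correct, and it gets to \eqref{eq:elementary} by a genuinely different route than the paper. The paper never splits into cases: it writes $|\xi|^s-|\eta|^s-|\xi-\eta|^s$ via an exact nested integral representation (paths $A(\rho)=(1-\rho)\xi+\rho\eta$, $B(\rho)=(1-\rho)(\xi-\eta)$, and a second layer $E(\rho,\sigma)$), which produces the precise first-order term $\tfrac{s}{s-1}(\xi-\eta)\cdot\eta\,|\xi-\eta|^{s-2}$ plus a double-integral remainder whose integrand vanishes at $\eta=0$; this is estimated uniformly in one stroke, and the stated form of the lemma is then recovered by swapping the roles of $\eta$ and $\xi-\eta$, using the symmetry of $|\xi|^s-|\eta|^s-|\xi-\eta|^s$ under that exchange. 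You instead Taylor-expand only $|\xi|^s=|\eta+\mu|^s$ to second order in the single variable $t\mapsto|\eta+t\mu|^s$ (with the correct justification that $s\ge 3$ makes $f'$ Lipschitz even through zeros of $\eta+t\mu$, where the nonnegative quadratic degenerates to $|\mu|^2(t-t_0)^2$), and you dispose of the leftover $-|\mu|^s$ by the dichotomy $|\mu|\le|\eta|$ versus $|\mu|>|\eta|$: absorbed into $|\mu|^2|\eta|^{s-2}$ in the first regime, and in the second regime every term is bounded crudely by $|\eta|\,|\mu|^{s-1}$ using $\bigl||a|^s-|b|^s\bigr|\le s|a-b|\max(|a|,|b|)^{s-1}$. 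All the individual steps check out (the bound $|f''(t)|\le s(s-1)|\eta+t\mu|^{s-2}|\mu|^2$, the absorption $|\mu|^s\le|\mu|^2|\eta|^{s-2}$ when $|\mu|\le|\eta|$, and $|\eta|^{s-1}|\mu|\le|\eta|\,|\mu|^{s-1}$ when $|\mu|>|\eta|$). What your approach buys is elementarity — one-variable calculus plus a case split, no double-integral identity and no symmetry swap — at the price of exploiting the two-term shape of the right-hand side; the paper's approach buys an exact formula for the remainder (hence the explicit structure of the error, with no case analysis), which is slightly more informative but heavier to set up.
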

	}

	We are in a position to close an a priori estimate. Recall from the above that \begin{equation*}
	\begin{split}
	\frac{d}{dt} \frac{c_d}{2} \nrm{\rho}_{\dot{H}^s}^2  =  c_K \calT[G], \quad G := |\xi|^{2s} \xi\cdot\eta |\eta|^{-2b}.
	\end{split}
	\end{equation*}
	 {
	Then making use of Lemma \ref{lem_aux} and recalling the definition of $G_0, G_1$, and $G_s$ give }
	\begin{equation*}
	\begin{split}
	&\left| G - G_0 - G_1 - G_s \right|(\xi,\eta) \\
	&\qquad \le C( |\xi-\eta|^2|\eta|^{s-2} + |\eta||\xi-\eta|^{s-1} )|\xi|^{s}|\eta|^{1-2b} (|\xi-\eta|+|\eta|) \\
	&\qquad \le   { C|\xi|^s |\eta|^{2-2b} |\xi-\eta|^2 (|\eta|^{s-2} + |\eta|^{s-3}|\xi-\eta|+|\eta||\xi-\eta|^{s-3}+ |\xi-\eta|^{s-2}). } \\
	&\qquad \le C|\xi|^s(|\eta|^{s-2b}|\xi-\eta|^2 + |\eta|^{s-1-2b}|\xi-\eta|) + C|\xi|^s(|\xi-\eta|^s| |\eta|^{2-2b} + |\xi-\eta|^{s-1}|\eta|^{3-2b}).
	\end{split}
	\end{equation*} From this, we obtain that \begin{equation}\label{eq:Y}
	\begin{split}
	&\left| \calT[G - G_0 - G_1 - G_s  ] \right| \\
	&\qquad \le C\iint_{\R^d \times \R^d} |\xi|^s(|\eta|^{s-2b}|\xi-\eta|^2 + |\eta|^{s-1-2b}|\xi-\eta|) |\whr(\eta)| |\whr(\xi)||\whr(\eta-\xi)| \,d\eta\, d\xi  \\
	&\qquad\quad  + C\iint_{\R^d \times \R^d} |\xi|^s(|\xi-\eta|^s| |\eta|^{2-2b} + |\xi-\eta|^{s-1}|\eta|^{3-2b}) |\whr(\eta)| |\whr(\xi)||\whr(\eta-\xi)| \,d\eta\, d\xi .
	\end{split}
	\end{equation} We see that both terms on the right hand side is bounded by \begin{equation*}
	\begin{split}
	 {C\nrm{ |\xi|^2(1+|\xi|)\whr }_{L^1_\xi} }\nrm{\rho}_{H^{s}}^2,
	\end{split}
\end{equation*} first using H\"older's inequality in the $\xi$ variable and then applying Young's convolution inequality. Then, recalling the estimates for $\calT[G_0], \calT[G_1]$, and $\calT[G_s]$, we have that \begin{equation*}
	\begin{split}
	\left| \frac{d}{dt} \frac{1}{2} \nrm{\rho}_{\dot{H}^s}^2 -c_K\int_{\R^d} \rho |\Lmb^{s-b}\nb\rho|^2 \,dx\right| \le C\nrm{ |\xi|^2 ( 1 + |\xi|)\whr }_{L^1_\xi} \nrm{\rho}_{H^{s}}^2 .
	\end{split}
	\end{equation*} On the other hand, we have trivially \begin{equation}\label{eq:L2-apriori}
	\begin{split}
	\left| \frac{d}{dt} \frac{1}{2} \nrm{\rho}_{L^2}^2 \right| \le C\nrm{ |\xi|^2 ( 1 + |\xi|)\whr }_{L^1_\xi} \nrm{\rho}_{H^{s}}^2 .
	\end{split}
	\end{equation} Therefore, if $c_K\le 0$ and $s > \frac{d}{2} + 3$, we conclude the estimate \begin{equation}\label{eq:case1-concl}
	\begin{split}
	\left|\frac{d}{dt} \nrm{\rho}_{H^s}^2\right| \le C \nrm{\rho}_{H^s}^3,
	\end{split}
	\end{equation} assuming that $\rho\ge0$.
	
	\medskip
	
	\noindent\textit{Case 2. $\nu>0$}
	
	\medskip 
	
	\noindent As in Case 1, we consider the $\dot{H}^s$ estimate for the solution $\rho$, assuming for simplicity that $0<b<1$. Applying $\Lmb^s$ to the equation and integrating against $\Lmb^s \rho$, we obtain
	\begin{equation*}
	\begin{split}
	\frac{c_d}{2}\frac{d}{dt} \int_{\R^d} |\Lmb^s\rho|^2\,dx + \nu \nrm{\rho}_{\dot{H}^{s+1}}^2 = c_K \calT[G] 
	\end{split}
	\end{equation*} with $G$ as in the above. Here $c_d>0$ is a constant depending only on $d$. We again decompose \begin{equation*}
	\begin{split}
	G = (G_0-G_0') + G_0' + G_1 + G_s + (G-G_0-G_1-G_s)
	\end{split}
	\end{equation*} and the term corresponding to $G_0-G_0'$ is now handled as follows: 
	 \begin{equation*}
	\begin{split}
	|c_K|\int_{\R^d} \rho |\Lmb^{s-b} \nb\rho|^2\, dx \le C\nrm{\rho}_{L^\infty} \nrm{\rho}_{\dot{H}^s}^{2b} \nrm{\rho}_{\dot{H}^{s+1}}^{2(1-b)} \le C_\nu  \nrm{\rho}_{L^\infty}^{\frac{1}{b}} \nrm{\rho}_{\dot{H}^s}^2  + \frac{\nu}{8} \nrm{\rho}_{\dot{H}^{s+1}}^2.
	\end{split}
	\end{equation*} Next, we use a rough bound \begin{equation*}
	\begin{split}
	|G_0'(\xi,\eta)| \le C|\xi|^{s-\frac{3}{2}b+1} |\eta|^{s-2b+1} (|\xi|^{\frac{3}{2}b-2} + |\eta|^{\frac{3}{2}b-2})|\xi-\eta|^2 \le C |\xi-\eta|^2 (1 + |\xi|^s)(1 + |\eta|^s)(|\xi|+|\eta|),
	\end{split} 
	\end{equation*} which gives \begin{equation*}
	\begin{split}
	\left| \calT[G_0'] \right| \le  C\nrm{ |\xi|^2\whr }_{L^1_\xi} \nrm{\rho}_{H^{s}}(\nrm{\rho}_{\dot{H}^{s+1}} + \nrm{\rho}_{L^2}) \le C_\nu(1 + \nrm{ |\xi| \whr }_{L^1_\xi}^2)\nrm{\rho}_{{H}^s}^2  + \frac{\nu}{8} \nrm{\rho}_{\dot{H}^{s+1}}^2. 
	\end{split}
	\end{equation*} For $G_s$, we simply use the previous bound \eqref{eq:TG1}, and regarding $G_1$, we observe  that \begin{equation*}
	\begin{split}
	|G_1|\le C|\xi-\eta||\eta|^{s-2b}|\xi|^{s+1} 
	\end{split}
	\end{equation*} so that arguing as in the proof following \eqref{eq:Y} above, \begin{equation*}
	\begin{split}
	\left| \calT[G_1] \right| + \left| \calT[G_s] \right| \le C\nrm{ (1 + |\xi|)\whr }_{L^1_\xi}\nrm{\rho}_{H^{s}} \nrm{\rho}_{\dot{H}^{s+1}} \le C_\nu(1 + \nrm{ |\xi|\whr }_{L^1_\xi}^2)\nrm{\rho}_{{H}^s}^2  + \frac{\nu}{8} \nrm{\rho}_{\dot{H}^{s+1}}^2.
	\end{split}
	\end{equation*}  Finally, using \begin{equation*}
	\begin{split}
	|G - G_0 - G_1 - G_s | \le C(|\xi-\eta|^2|\eta|^{s-2}+|\eta||\xi-\eta|^{s-1})|\xi|^{s+1}|\eta|^{1-2b} 
	\end{split}
	\end{equation*} we estimate \begin{equation*}
	\begin{split}
		\left| \calT[G - G_0 - G_1 - G_s  ] \right| &\le {C\nrm{ |\xi|(1+|\xi|)\whr }_{L^1_\xi} }\nrm{\rho}_{H^{s}}(\nrm{\rho}_{\dot{H}^{s+1}}  +\nrm{\rho}_{L^2})\\
		&\le C_\nu(1 + \nrm{ |\xi|(1+|\xi|)\whr }_{L^1_\xi}^2)\nrm{\rho}_{{H}^s}^2  + \frac{\nu}{8} \nrm{\rho}_{\dot{H}^{s+1}}^2.
	\end{split}
	\end{equation*} Combining the estimates, we obtain that \begin{equation*}
	\begin{split}
	\left|\frac{c_d}{2}\frac{d}{dt} \int |\Lmb^s\rho|^2 dx + \frac{\nu}{2} \nrm{\rho}_{\dot{H}^{s+1}}^2 \right| \le  C_\nu(1 + \nrm{ |\xi|(1+|\xi|)\whr }_{L^1_\xi}^2)\nrm{\rho}_{{H}^s}^2\le C_\nu(1 + \nrm{\rho}_{H^s}^2)\nrm{\rho}_{H^s}^2 . 
	\end{split}
	\end{equation*} Together with the $L^2$ estimate for $\rho$ given in \eqref{eq:L2-apriori}, we conclude \begin{equation}\label{eq:case2-concl}
	\begin{split}
	\frac{d}{dt} \nrm{\rho}_{H^s}^2 \le C_\nu(1 + \nrm{\rho}_{H^s}^2)\nrm{\rho}_{H^s}^2 . 
	\end{split}
	\end{equation} 
	
	\medskip
	
	\noindent\textit{Existence and uniqueness}
	
	\medskip 
	
	\noindent We prove existence and uniqueness for the case $c_K<0$ and $\nu=0$. The proof for the other case can be carried out in a similar manner. We shall only sketch the proof since this procedure is rather standard (cf. \cite{K_inv,KL}), given the $H^s$ a priori estimate. To begin with, we consider the following regularized system \begin{equation}\label{eq:hyper}
	\begin{split}
	&\rd_t\rho^{(\mu)} + \nb\cdot ( \rho^{(\mu)} u^{(\mu)} ) = 0, \\
	&u^{(\mu)} = c_K \Lmb^{\alp-d}_\mu \nb \rho^{(\mu)},\\
	&\rho^{(\mu)}(t=0) = \phi^{(\mu)} * \rho_0
	\end{split}
	\end{equation}  for each $\mu>0$. Here, $\Lmb_\mu^{-b}$ for $b>0$ is a regularization of $\Lmb^{-b}$ defined by the multiplier \begin{equation*}
	\begin{split}
		|\xi|^{-b}\chi(\mu|\xi|)
	\end{split}
\end{equation*} where $\chi(\cdot)\ge0$ is a smooth function supported in $[0,1]$ and satisfying $\chi(0)=1$, so that we have $\Lmb_\mu^{-b}\rightarrow\Lmb^{-b}$ as $\mu\rightarrow 0^+$. Note that for a given $\mu>0$, $u^{(\mu)} = c_K \Lmb^{\alp-d}_\mu \nb \rho^{(\mu)}$ is $C^\infty$-smooth for $\rho^{(\mu)}\in L^2(\Omg)$. Moreover, $\{\phi^{(\mu)}\}$ is an approximation of the identity; $\phi^{(\mu)}(\cdot) = \frac{1}{\mu^d} \phi(\frac{\cdot}{\mu})$ for some non-negative smooth bump function $\phi$ satisfying $\int_{\Omg}\phi\,dx = 1$. Since $\rho_0\ge0$, we have that $\rho^{(\mu)}_0\ge 0$ as well. 

It is not difficult to show that for each $\mu>0$, there is a local-in-time $C^\infty$-smooth solution $\rho^{(\mu)}\ge 0$ in $[0,T_\mu]$ for $T_\mu>0$ possibly depending on $\mu$. For instance, one can consider the iterations \begin{equation*}
	\begin{split}
		\rd_t\rho^{(\mu),n} + \nb\cdot ( \rho^{(\mu),n} u^{(\mu),n-1} ) = 0,\\
		u^{(\mu),n-1} = c_K \Lmb^{\alp-d}_\mu \nb \rho^{(\mu),n-1}
	\end{split}
\end{equation*} with $\rho^{(\mu),0} \equiv \rho^{(\mu)}_0$ on $[0,T_\mu]$. By taking $T_\mu>0$ sufficiently small (depending on $\mu$ and the initial data), the iterates $\{ \rho^{(\mu),n} \}_{n\ge 0}$ are non-negative and uniformly bounded in $C([0,T_\mu];C^1(\Omg))$. Therefore, we have uniform convergence $\rho^{(\mu),n} \rightarrow \rho^{(\mu)}$ for some smooth function $\rho^{(\mu)}\ge0$, which provides a solution to \eqref{eq:hyper}.

Furthermore, the a priori estimate \eqref{eq:case1-concl} can be justified for $\rho^{(\mu)}$ with any $\mu>0$ (by repeating the proof with $\Lmb$ replaced by $\Lmb_\mu$). Therefore, for any $\mu>0$, the local solution $\rho^{(\mu)}$ can be extended to some $T>0$ which depends only on $\nrm{\rho_0}_{H^s}$, with uniformly bounded norm $\nrm{ \rho^{(\mu)}}_{L^\infty([0,T];H^s)}$. Therefore, by passing to a subsequence $\{\mu_k\}$ with $\mu_k\rightarrow 0^+$, there exists some $\rho\in L^\infty([0,T];H^s(\Omg))$ such that we have convergence \begin{equation*}
	\begin{split}
	\rho^{(\mu_k)} \longrightarrow \rho 
	\end{split}
	\end{equation*} strongly in $L^\infty([0,T];H^{s-1}(\Omg))$ and weakly in $L^\infty([0,T];H^s(\Omg)).$ First, strong convergence in $L^\infty([0,T];H^{s-1}(\Omg))$ implies pointwise convergence, which in particular guarantees that $\rho\ge0$.
	Next, it is straightforward to see that $\rho$ is a solution to \eqref{eq:Euler-first} with initial data $\rho_0$. It still remains to show that $\rho$ belongs to $C([0,T];H^s(\Omg))$. To this end, we shall prove continuity at $t =0$; the case $t>0$ can be treated in a similar way. First, observe that $\rho(t_k) \rightharpoonup \rho_0$ weakly in $H^s$ for $t_k\rightarrow 0^+$. This follows from strong convergence $\rho^{(\mu_k)} \rightarrow \rho$ in $L^\infty([0,T]; L^2(\Omg))$, $\rho^{(\mu_k)} \in C([0,T];H^s(\Omg))$, and that $L^2(\Omg)$ is dense in the dual of $H^s(\Omg)$. From weak convergence, it follows that \begin{equation*}
	\begin{split}
	\nrm{\rho_0}_{H^s} \le \liminf_{k\rightarrow\infty}  \nrm{\rho(t_k)}_{H^s}.  
	\end{split}
	\end{equation*} On the other hand, for any sufficiently small $t_k$, \begin{equation*}
	\begin{split}
	\nrm{\rho(t_k)}_{H^s} & \le \limsup_{\mu\rightarrow 0} \nrm{\rho^{(\mu)}(t_k)}_{H^s} \le \limsup_{\mu\rightarrow 0} \frac{ \nrm{\rho^{(\mu)}_0}_{H^s} }{1 - Ct_k\nrm{\rho_0^{(\mu)}}_{H^s} }
	\end{split}
	\end{equation*} where $C>0$ is some absolute constant. In the first inequality, we used the fact that the norm can only decrease in the weak limit. The second inequality follows from the a priori estimate for $\rho^{(\mu)}$ in $H^s(\Omg)$. Now, taking the limit $k \rightarrow \infty$ and recalling that $\rho_0^{(\mu)}$ converges strongly in $H^s(\Omg)$ to $\rho_0$, we deduce \begin{equation*}
	\begin{split}
	\limsup_{k\rightarrow\infty} \nrm{\rho(t_k)}_{H^s} & \le \nrm{\rho_0}_{H^s} .
	\end{split}
	\end{equation*} This shows that the map $t\mapsto \nrm{\rho(t)}_{H^s}$ is continuous at $t = 0$. From weak convergence in $H^s(\Omg)$, it follows that $t\mapsto \rho(t)$ is continuous at $t = 0$ with values in $H^s$ as well. 
	
	To prove uniqueness, we assume that there exist two non-negative solutions $\rho$ and $\trho$ belonging to $L^\infty([0,T];H^s(\Omg))$ for some $T>0$. Defining $g = \rho-\trho$ and $v = u -\tu$, we have that the equation for $g$ is given by 
	\begin{equation*}
	\begin{split}
	\rd_t g + \nb\cdot ( g u ) + \nb\cdot(\trho v) = 0.  
	\end{split}
	\end{equation*} Multiplying by $g$ and integrating, it is not difficult to derive the following estimate: \begin{equation*}
	\begin{split}
	\frac{d}{dt} \nrm{g}_{L^2}^2 \le C\left(\nrm{\nb u}_{L^\infty} \nrm{g}_{L^2}^2 + \nrm{\nb\trho}_{L^\infty}\nrm{g}_{L^2}\nrm{g}_{H^1} + \nrm{\trho}_{L^\infty}\nrm{g}_{H^1}^2 \right).
	\end{split}
	\end{equation*} To close the estimate, we consider the equation for $\rd g$: \begin{equation*}
	\begin{split}
	\frac{1}{2}\frac{d}{dt}\nrm{\rd g}_{L^2}^2 + \int_{\Omg} \nb \cdot (\rd g u) \rd g dx + \int_{\Omg} \nb\cdot (g \rd u)\rd g dx + \int_{\Omg} \nb\cdot ( \rd\trho v ) \rd g dx + \int_{\Omg} \nb\cdot (\trho \rd v) \rd g dx = 0. 
	\end{split}
	\end{equation*} First, it is not difficult to estimate \begin{equation*}
	\begin{split}
	\left| \int_{\Omg} \nb \cdot (\rd g u) \rd g dx  \right| + \left|\int_{\Omg} \nb\cdot (g \rd u)\rd g dx  \right| \le C \nrm{\nb u}_{L^\infty} \nrm{g}_{H^1}^2\le C \nrm{\rho}_{H^s} \nrm{g}_{H^1}^2
	\end{split}
	\end{equation*} with an integration by parts. Next, we claim that the other terms can be bounded as follows: \begin{equation*}
	\begin{split}
	\left| \int_{\Omg} \nb\cdot ( \rd\trho v ) \rd g dx + \int_{\Omg} \nb\cdot (\trho \rd v) \rd g dx  + c_K\int_{\Omg}\trho |\nb\rd \Lmb^{-b}g|^2 dx \right| \le C\nrm{\trho}_{H^s}\nrm{g}_{H^1}^2 
	\end{split}
	\end{equation*} where $-2b=\alp-d$. To see this, we consider the term with most derivatives on $v$: \begin{equation*}
	\begin{split}
	\int_{\Omg} \trho \rd \nb\cdot v \rd g dx =  c_K\int_{\Omg} \trho \nb \cdot \Lmb^{-b} ( \Lmb^{-b} \nb\rd g )    \rd g dx .
	\end{split}
	\end{equation*} Hence\begin{equation*}
	\begin{split}
	&\int_{\Omg} \trho \rd \nb\cdot v \rd g dx + c_K\int_{\Omg}\trho |\nb\rd \Lmb^{-b}g|^2 dx \cr
	&\quad = c_K\int_{\Omg} [\trho\nb \cdot,\Lmb^{-b}]  ( \Lmb^{-b} \nb\rd g ) \, \rd g dx  - c_K \int_{\Omg} \nb\trho \cdot  ( \Lmb^{-b} \nb\rd g ) \Lmb^{-b}\rd g\, dx. 
	\end{split}
	\end{equation*} It is easy to see that the last term is bounded by $C\nrm{\trho}_{H^s}\nrm{g}_{H^1}^2 $, and the first term on the right hand side can be estimated in a parallel manner with the corresponding term from the a priori estimate above (cf. \eqref{eq:comm}). Therefore, we obtain that \begin{equation*}
	\begin{split}
	\left| \frac{1}{2}\frac{d}{dt}\nrm{g}_{H^1}^2 - c_K\int_{\Omg}\trho |\nb\rd \Lmb^{-b}g|^2 dx \right| \le C( \nrm{\trho}_{H^s}+\nrm{\rho}_{H^s})\nrm{g}_{H^1}^2. 
	\end{split}
	\end{equation*} Since $c_K<0$, this shows that if $\nrm{g}_{H^1} = 0$ at $t = 0$, $\nrm{g}_{H^1} = 0$ as long as $\nrm{\trho}_{H^s}+\nrm{\rho}_{H^s}$ is bounded. This gives uniqueness.  
\end{proof}

\noindent{We finally provide a proof of Lemma \ref{lem_aux} which is given in \cite{CJO}.}
\begin{proof}[Proof of Lemma \ref{lem_aux}]
	Using the mean value theorem, we begin with writing \begin{equation*}
		\begin{split}
			|\xi|^s - |\eta|^s  = -s(\eta-\xi) \cdot  \int_0^1 |A(\rho)|^{s-2} A(\rho)\, d\rho ,
		\end{split}
	\end{equation*} and \begin{equation}\label{eq:con}
	\begin{split}
		|\xi|^s - |\eta|^s - |\xi-\eta|^s  = -s(\eta-\xi) \cdot\int_0^1 \left(|A(\rho)|^{s-2} A(\rho) - |B(\rho)|^{s-2} B(\rho) \right) d\rho,
	\end{split}
\end{equation}  where $A$ and $B$ are defined respectively by  \begin{equation*}
		\begin{split}
			A(\rho) = (1-\rho)\xi+\rho \eta, \quad B(\rho) = (1-\rho)(\xi-\eta). 
		\end{split}
	\end{equation*} Now, we write  \begin{equation*}
		\begin{split}
			|A(\rho)|^{s-2} A(\rho) - |B(\rho)|^{s-2} B(\rho) &= |B(\rho)|^{s-2} (A(\rho) -  B(\rho)) + (|A(\rho)|^{s-2}- |B(\rho)|^{s-2}) A(\rho) \\
			& =: I + II. 
		\end{split}
	\end{equation*}We then compute that   
	\begin{equation*}
		\begin{split}
			-s(\eta-\xi) \cdot \int_0^1 I \, d\rho = \frac{s}{s-1} (\xi-\eta)\cdot \eta |\xi-\eta|^{s-2}
		\end{split}
	\end{equation*} and \begin{equation*}
	\begin{split}
		-s(\eta-\xi) \cdot \int_0^1 II \,d\rho = -s(s-2)(\eta-\xi)\cdot\int_0^1\int_0^1 A(\rho) |E(\rho,\sigma)|^{s-4} \eta \cdot E(\rho,\sigma)\,d\rho d\sigma,
	\end{split}
\end{equation*} 
where we define $E(\rho,\sigma) = (1-\rho)\xi + (\rho-\sigma)\eta$. Therefore, recalling \eqref{eq:con} and the definition of $I$ and $II$, \begin{equation*}
	\begin{split}
		|\xi|^s - |\eta|^s - |\xi-\eta|^s  &= \frac{s}{s-1} (\xi-\eta)\cdot \eta |\xi-\eta|^{s-2}  \cr
		&\quad -s(s-2)(\eta-\xi)\cdot\int_0^1\int_0^1 A(\rho) |E(\rho,\sigma)|^{s-4} \eta \cdot E(\rho,\sigma)\,d\rho d\sigma,
	\end{split}
\end{equation*} so that after subtracting $s(\xi-\eta)\cdot\eta |\xi-\eta|^{s-2}$ from both sides, \begin{equation*}
		\begin{split}
			&|\xi|^s - |\eta|^s - |\xi-\eta|^s - s(\xi-\eta)\cdot\eta |\xi-\eta|^{s-2} \\
			&\quad  = s(s-2)(\xi-\eta) \cdot \left[ \int_0^1 \int_0^1  A(\rho)|E(\rho,\sigma)|^{s-4}\eta\cdot E(\rho,\sigma) - \frac{1}{s-1} \eta |\xi-\eta|^{s-2}\,d\sigma d\rho \right] .
		\end{split}
	\end{equation*} The last integral vanishes when $\eta=0$. Therefore, we obtain the estimate \begin{equation*}
		\begin{split}
			\left||\xi|^s - |\eta|^s - |\xi-\eta|^s - s(\xi-\eta)\cdot\eta |\xi-\eta|^{s-2} \right| &\le C_s|\xi-\eta| |\eta|^2 \left( |\xi-\eta|^{s-3} + |\eta|^{s-3} \right).
		\end{split}
	\end{equation*} Switching the roles of $\xi-\eta$ and $\eta$ gives \eqref{eq:elementary}.
\end{proof}

\section*{Acknowledgement} We thank Sung-Jin Oh for helpful conversations regarding degenerate parabolic equations and pseudo-differential calculus. 
YPC has been supported by NRF grant (No. 2017R1C1B2012918), POSCO Science Fellowship of POSCO TJ Park Foundation, and Yonsei University Research Fund of 2019-22-021 and 2020-22-0505. IJJ has been supported  by a KIAS Individual Grant MG066202 at Korea Institute for Advanced Study, the Science Fellowship of POSCO TJ Park Foundation, and the National Research Foundation of Korea grant (No. 2019R1F1A1058486). 

\bibliographystyle{amsplain}
\bibliography{FPME}

\end{document}